\documentclass[12pt]{amsart}
\usepackage{amsmath, amsthm, amscd, amsfonts}

\setlength{\textwidth}{6.5in}
\setlength{\textheight}{8.7in}
\setlength{\evensidemargin}{-0.2in}
\setlength{\oddsidemargin}{-0.2in}

\newtheorem{theorem}{Theorem}[section]

\newtheorem{proposition}[theorem]{Proposition}
\newtheorem{corollary}[theorem]{Corollary}
\theoremstyle{definition}
\newtheorem{definition}[theorem]{Definition}

\theoremstyle{remark}

\newtheorem{remark}[theorem]{Remark}
\numberwithin{equation}{section}

\begin{document}
\title[Generalized inverses and polar decomposition]
{Generalized inverses and polar decomposition of unbounded
regular operators on Hilbert $C^*$-modules}
\author{M. Frank}
\address{Michael Frank, \newline Hochschule f\"{u}r Technik,
Wirtschaft und Kultur (HTWK) Leipzig,
Fachbereich IMN, Gustav-Freytag-Strasse 42A, D-04277 Leipzig,
Germany } \email{mfrank@imn.htwk-leipzig.de}
\author{K. Sharifi}
\address{Kamran Sharifi, \newline Department of Mathematics,
Shahrood University of Technology, P. O. Box 3619995161-316,
Shahrood, Iran} \email{sharifi.kamran@gmail.com and
sharifi@shahroodut.ac.ir}

\subjclass{Primary 46L08; Secondary 47L60, 46C05}
\keywords{Hilbert $C^*$-module, unbounded operator, polar
          decomposition, generalized inverses,
          $C^*$-algebras of compact operators}

\begin{abstract}
In this note we show that an unbounded regular operator $t$ on
Hilbert $C^*$-modules over an arbitrary $C^*$ algebra $
\mathcal{A}$ has polar decomposition if and only if the closures
of the ranges of $t$ and $|t|$ are orthogonally complemented, if
and only if the operators $t$ and $t^*$ have unbounded regular
generalized inverses. For a given $C^*$-algebra $ \mathcal{A}$ any
densely defined $\mathcal A$-linear closed operator $t$ between
Hilbert $C^*$-modules has polar decomposition, if and only if any
densely defined $\mathcal A$-linear closed operator $t$ between
Hilbert $C^*$-modules has generalized inverse, if and only if
$\mathcal A$ is a $C^*$-algebra of compact operators.
\end{abstract}
\maketitle

\section{Introduction.}
In the theory of $C^*$-algebras, an important role is played by
the spaces which are modules over a $C^*$-algebra and are
equipped with a structure which is like an inner product but which,
instead of being scalar-valued as in the case of Hilbert spaces,
takes its values in the $C^*$-algebra. Such modules
are called (pre-)Hilbert $C^*$-modules. Let us quickly recall the
definition of a Hilbert $C^*$-module.

A (left) {\it pre-Hilbert $C^*$-module} over a (not necessarily
unital) $C^*$-algebra $\mathcal{A}$ is a left $\mathcal{A}$-module
$E$ equipped with an $\mathcal{A}$-valued inner product $\langle
\cdot , \cdot \rangle : E \times E \to \mathcal{A}$,
which is $\mathcal A$-linear in the first variable and has the
properties:
$$ \langle x,y \rangle=\langle y,x \rangle ^{*},
   \ \ \ \langle x,x \rangle \geq 0 \ \ {\rm with} \
   {\rm equality} \ {\rm if} \ {\rm and} \ {\rm only} \
   {\rm if} \ x=0.
$$
We always suppose that the linear structures of $\mathcal A$
and $E$ are compatible.

A pre-Hilbert $\mathcal{A}$-module $E$ is called a {\it Hilbert $
\mathcal{A}$-module} if $E$ is a Banach space with respect to the
norm $\| x \|=\|\langle x,x\rangle \|_{\mathcal A}^{1/2}$. If $E$,
$F$ are two Hilbert $\mathcal{A}$-modules then the set of all
ordered pairs of elements $E \oplus F$ from $E$ and $F$ is a
Hilbert $\mathcal{A}$-module with respect to the $\mathcal
A$-valued inner product $\langle (x_{1},y_{1}),(x_{2},y_{2})
\rangle= \langle x_{1},x_{2}\rangle_{E}+\langle y_{1},y_{2}
\rangle _{F}$. It is called the {\it orthogonal sum of $E$ and
$F$}. A pre-Hilbert $\mathcal A$-module $E$ of a pre-Hilbert
$\mathcal A$-module $F$ is an orthogonal summand if $E \oplus
E^\bot = F$, where $E^\bot$ denotes the orthogonal complement of
$E$ in $F$. If $F$ is a Hilbert $ \mathcal{A}$-module and $F=E
\oplus E^\bot$ then $E$ and $E^\bot$ are necessarily  Hilbert $
\mathcal{A}$-submodules (cf. \cite{WEG}, Lemma 15.3.4). Some
interesting results about orthogonally complemented submodules can
be found in \cite{FR2}, \cite{FR3}, \cite{MAG}, \cite{SCH}. For
the basic theory of Hilbert $C^*$-modules we refer to the books
\cite{LAN}, \cite{M-T} and to some chapters of \cite{WEG}.

As a convention, throughout the present paper we assume
$\mathcal{A}$ to be an arbitrary $C^*$-algebra (i.e. not necessarily
unital). Since we deal with bounded and unbounded operators at the
same time we simply denote bounded operators by capital letters and
unbounded operators by lower case letters. We use the denotations
$Dom(.)$, $Ker(.)$ and $Ran(.)$ for domain, kernel and range of
operators, respectively.

Suppose $E,\ F$ are Hilbert $\mathcal{A}$-modules. We denote the
set of all bounded $\mathcal{A}$-linear maps $T: E \to F$ for which
there is a map $T^*: F \to E$  such that the equality $ \langle Tx,y
\rangle _{F} = \langle x,T^*y \rangle _{E}$ holds for any $ x \in
E,\ y \in F$ by $B(E,F)$. The operator $T^*$ is called the {\it
adjoint operator} of $T$.

The polar decomposition is a useful tool that represents an
operator as a product of a partial isometry and a positive
element. It is well known that every bounded operator on Hilbert
spaces has polar decomposition. In general bounded adjointable
operators on Hilbert $C^*$-modules do not have polar composition,
but Wegge-Olsen has given a necessary and sufficient
condition for bounded adjointable operators to admit polar
decomposition. He has proved that a bounded adjointable operator
$T$ has polar decomposition if and only if $\overline{Ran(T)}$ and
$\overline{Ran(|T|)}$ are orthogonal direct summands (cf.
\cite{WEG}, Theorem 15.3.7).

Let us review the polar decomposition of densely defined closed
operators on Hilbert spaces. Suppose $H$ and $H^{'}$ are Hilbert
spaces and $t : Dom(t) \subseteq H \to H^{'}$ is a densely
defined closed operator, then there exists a partial isometry $
\mathcal{V} \in B(H,H^{'})$ such that
$$
    t=\mathcal{V}|t|, \ \ Ker(t)=Ker( \mathcal{V})
$$
where $|t|:=(t^*t)^{1/2}$ (cf. \cite{KAT}, VI. Section 2.7).
Furthermore, every densely defined closed operator $t : Dom(t)
\subseteq H \to H^{'}$ has a densely defined, closed generalized
inverse, i.e. there exists a densely defined closed operator $s$
such that $tst=t$, $sts=s$, $(ts)^*= \overline{ts}$ and $(st)^*=
\overline{st}$ (cf. \cite{PYT}, Lemma 12).

In \cite{GUL} Gulja\v{s} lifts the above facts to the densely
defined closed operators on Hilbert $C^*$-modules over arbitrary
$C^*$-algebras $K(H)$ of all compact operators on a Hilbert space
$H$ of arbitrary cardinality. In fact he has found a bijective
operation-preserving map between the space of all densely defined
closed operators on Hilbert $K(H)$-modules and the space of all
densely defined closed operators on a suitable Hilbert space,
so he may lift certain properties of operators from Hilbert spaces
to Hilbert $K(H)$-modules (cf. \cite{GUL}, Theorems 2.4, 3.1, 3.3).

In the present note we give a necessary and sufficient condition
for unbounded regular operators to admit polar decomposition. In
fact we will prove that an unbounded regular operator $t$ on
Hilbert $C^*$-modules over an arbitrary $C^*$ algebra
$\mathcal{A}$ has polar decomposition if and only if
$\overline{Ran(t)}$ and $\overline{Ran(|t|)}$ are orthogonally
complemented, if and only if the operators $t$ and $t^*$ have
unbounded regular generalized inverses.

Some interesting characterizations of an arbitrary $C^*$-algebra
of compact operators (i.e. of a $C^*$-algebra that admits a
faithful $*$-representation in the set of all compact operators on
a certain Hilbert space) have been given in \cite{ARV},
\cite{FR1}, \cite{F-S}, \cite{MAG}, \cite{SCH}. Beside the work
of these authors we give other descriptions of the $C^*$-algebra
of compact operators via the above properties.

\newcounter{cou001}

\section{Preliminaries}

In this section we recall some definitions and basic facts about
regular operators on Hilbert $\mathcal{A}$-modules. These
operators were first introduced by Baaj and Julg in \cite{B-J}.
More details and properties can be found in chapters 9 and 10 of
\cite{LAN}, and in the papers \cite{F-S},  \cite{KK}, \cite{PAL},
\cite{KUS}, \cite{WOR}.

Let $E, F$  be Hilbert $\mathcal{A}$-modules, we will use the
notation $t : Dom(t) \subseteq E \to F$ to indicate that $t$ is
an $\mathcal{A}$-linear operator whose domain $Dom(t)$ is a dense
submodule of $E$ (not necessarily identical with $E$) and whose
range is in $F$. Given $t,s:Dom(t),Dom(s) \subseteq E \to F$, we
write $s \subseteq t$ if $Dom(s)\subseteq Dom(t)$ and $s(x)=t(x)$
for all $x \in Dom(s)$. A densely defined operator $t: Dom(t)
\subseteq E \to F$ is called {\it closed} if its graph
$G(t)=\{(x,t(x)): \ x \in Dom(t)\}$ is a closed submodule of the
Hilbert $\mathcal{A}$-module $E \oplus F$. If $t$ is closable,
the operator $s : Dom(s) \subseteq E \to F$ with the property
$G(s)= \overline{G(t)}$ is called the {\it closure} of $t$ denoted
by $s= \overline{t}$.
The operator $ \overline{t}$ is the smallest closed operator that
contains $t$.

A densely defined operator $t: Dom(t) \subseteq E \to F$
is called {\it adjointable} if it possesses a densely defined map
$t^*: Dom(t^*) \subseteq F \to E$ with the domain
$$
   Dom(t^*)= \{y \in F : {\rm there} \ {\rm exists} \ z \in E \
   {\rm such} \ {\rm that} \ \langle t(x),y \rangle _{F} = \langle
   x , z \rangle _{E} \ {\rm for} \ {\rm any} \ x \in Dom(t) \}
$$
which satisfies the property $\langle t(x),y \rangle_{F} = \langle
x,t^*(y) \rangle_{E}$,  for any $x \in Dom(t), \ y \in Dom(t^*)$.
This property implies that $t^*$ is a closed $\mathcal{A}$-linear
map.

\begin{remark}
Recall that the composition of two densely defined operators
$t,s$ is the unbounded operator $ts$ with $Dom(ts)=\{x\in Dom(s)
: \ s(x)\in Dom(t) \}$ given by $(ts)(x)=t(s(x))$ for all  $x\in
Dom(ts)$. The operator $ts$ is not necessarily densely defined.
Suppose two densely defined operators $t,s$ are adjointable, then
$s^*t^* \subseteq (ts)^*$. If $T$ is a bounded adjointable
operator, then $s^*T^* = (Ts)^*$.
\end{remark}

A densely defined closed $\mathcal{A}$-linear map $t: Dom(t)
\subseteq E \to F$ is called {\it regular} if it is adjointable
and the operator $1+t^*t$ has a dense range. We denote the set of
all regular operators from $E$ to $F$ by $R(E,F)$. A criterion of
regularity via the graph of densely defined operators has been
given in \cite{F-S}. In fact a densely defined operator $t$ with 
a densely defined adjoint operator is
regular if and only if its graph is orthogonally complemented in
$E \oplus F$ (cf. \cite{F-S}, Corollary 2.2). If $t$ is regular
then $t^*$ is regular and $t=t ^{**}$, moreover $t^*t$ is regular
and selfadjoint (cf.~\cite{LAN}, Corollaries 9.4, 9.6 and
Proposition 9.9). Define $Q_{t}=(1+t^*t)^{-1/2}$ and
$F_{t}=tQ_{t}$, then $Ran(Q_{t})=Dom(t)$,  $0 \leq Q_{t} \leq 1$
in $B(E,E)$ and $F_{t}\in B(E,F)$ (cf.~\cite{LAN}, chapter 9). The
bounded operator $F_{t}$ is called the bounded transform (or
$z$-transform) of the regular operator $t$. The map $t\to F_{t}$
defines a bijection $$ R(E,F) \to \{ T \in B(E,F):\| T \|\leq 1 \
\, {\rm and} \ \ Ran(1- T^* T ) \ {\rm is} \ {\rm dense} \ in \ F
\}, $$ (cf. \cite{LAN}, Theorem 10.4). This map is
adjoint-preserving, i.e. $F_{t}^*=F_{t^*}$, and for the bounded
transform $F_{t}=tQ_{t}=t(1+t^*t)^{-1/2}$ we have $\|F_{t}\|\leq
1$ and $$ t=F_{t}(1-F_{t}^*F_{t})^{-1/2} \ {\rm and} \
Q_{t}=(1-F_{t}^*F_{t})^{1/2} \, . $$

For a regular operator $t \in R(E):=R(E,E)$ some usual
properties may be defined. A regular operator $t$ is called
{\it normal} iff $Dom(t)=Dom(t^*)$ and $\langle t(x),t(x)
\rangle=\langle t^*(x),t^*(x) \rangle$ for any $x \in Dom(t)$.
The operator $t$ is called {\it selfadjoint} iff $t^*=t$, and
$t$ is called {\it positive} iff $t$ is normal and
$\langle t(x),x \rangle \geq 0$ for any $x \in Dom(t)$.
Remarkably, a regular operator $t$ is selfadjoint (resp.,
positive) iff its bounded transform $F_t$ is selfadjoint
(resp., positive), cf. \cite{KUS,LAN}. Moreover, both
$t$ and $F_t$ have the same range and the same kernel.
A regular operator $t$ has closed range if and only if
its adjoint operator $t^*$ has closed range, and then for
$|t|:=(t^*t)^{1/2}$ the orthogonal sum decompositions $E=
Ker(t) \oplus Ran(t^*)= Ker(|t|) \oplus \overline{Ran(|t|)}$,
$F= Ker(t^*) \oplus Ran(t) = Ker(|t^*|) \oplus
\overline{Ran(|t^*|)}$ exist, cf. Proposition 1.2 of
\cite{F-S} and Result 7.19 of \cite{KUS}.

\begin{remark}
Let $t$ be a regular operator on an arbitrary Hilbert
$\mathcal{A}$-module $E$, and $F _{t} $ and $Q_{t}$ be as above
then one can see that $F_{t} \cdot p(F^*_{t}F_{t})=p(F_{t}F^*_{t})
\cdot F_{t}$ \ for any polynomial $p$ and, hence, by continuity
for any $p$ in $\mathbf{C}([0,1])$. In particular,
$F_{t}(1-F^*_{t}F_{t}) ^{1/2}=(1-F_{t}F^*_{t}) ^{1/2}F_{t}$ and so
by the equalities $Q_{t}=(1-F^*_{t}F_{t})^{1/2}$ and
$Q_{t^*}=(1-F_{t}F^*_{t}) ^{1/2}$ we have
$tQ_{t}^2=Q_{t^*}tQ_{t}$.
\end{remark}

Before closing this section we would like to define the concept of
generalized (or pseudo-) inverses of unbounded regular operators,
which is motivated by the definitions of densely defined closed
operators in \cite{GUL} and \cite{PYT}.

\begin{definition}
Let $t \in R(E,F)$ be a regular operator between two Hilbert
${\mathcal{A}}$-modules $E,F$ over some fixed $C^*$-algebra
${\mathcal{A}}$. A regular operator $s \in R(F,E)$ is called
the generalized inverse of $t$ if $tst=t$, $sts=s$, $(ts)^*=
\overline{ts}$ and $(st)^*= \overline{st}$.
\end{definition}

If a regular operator $t$ has a generalized inverse $s$, then
the above definition implies that $Ran(t) \subseteq Dom(s)$
and $Ran(s) \subseteq Dom(t)$. Note, that bounded
$\mathcal{A}$-linear operators may admit generalized inverses
in the set of regular operators even if they do not admit any
bounded generalized inverse operator. For examples, consider
contractive operators on Hilbert spaces with dense, but
non-closed range. Moreover, for bounded linear operators on
Hilbert spaces, for example, the property to admit polar
decomposition does not imply the property to admit a bounded
generalized inverse. More surprising are the results for
unbounded operators described in the next section.

\section{The polar decomposition and generalized inverses}

\begin{theorem} \label{thm_polar_decomp}
If $E,F$ are arbitrary Hilbert $\mathcal{A}$-modules over a
$C^*$-algebra of coefficients $\mathcal{A}$ and $t \in R(E,F)$
denotes a regular operator then the following conditions are
equivalent:

\begin{list}{(\roman{cou001})}{\usecounter{cou001}}
\item $t$ has a unique polar decomposition $t= \mathcal{V}|t|$,
where $\mathcal{V} \in B(E,F)$ is a partial isometry for which
$Ker(\mathcal{V})=Ker(t)$, $Ker(\mathcal{V^*})=Ker(t^*)$,
$Ran(\mathcal{V})=\overline{Ran(t)}$,
$Ran(\mathcal{V^*})=\overline{Ran(|t|)}$. That is
$\overline{Ran(t)}$ and $\overline{Ran(|t|)}=\overline{Ran(t^*)}$
are final and initial submodules of the partial isometry
$\mathcal{V}$, respectively.

\item $E=Ker(|t|) \oplus \overline{Ran(|t|)}$ and $F=Ker(t^*) \oplus
\overline{Ran(t)}$.

\item $t$ and $t^*$ have unique generalized inverses which are
adjoint to each other, $s$ and $s^*$.
\end{list}

\noindent
In this situation, $\mathcal{V^*}\mathcal{V}=\overline{t^*s^*}$ is
the projection onto $\overline{Ran(|t|)} = \overline{Ran(t^*)}$,
$\mathcal{V}\mathcal{V^*}=\overline{ts}$ is the projection onto
$\overline{Ran(t)}$, and $\mathcal{V^*}\mathcal{V}|t|=|t|$,
$\mathcal{V^*}t=|t|$ and $\mathcal{V}\mathcal{V^*}t=t$.
\end{theorem}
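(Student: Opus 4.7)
The plan is to reduce everything to the bounded setting via the bounded transform $F_t = tQ_t$ and then invoke Wegge-Olsen's theorem for bounded adjointable operators (\cite{WEG}, Theorem 15.3.7). First I would gather the preliminary identities linking $t$ to $F_t$: since $Q_t$ is injective with $Ran(Q_t) = Dom(t)$, one has $Ran(F_t) = Ran(t)$, $Ker(F_t) = Ker(t)$, and $Ker(F_t^*) = Ker(t^*)$. The crucial observation is $F_{|t|} = |F_t|$, which I would verify by computing $F_{|t|}^2 = |t|^2 Q_t^2 = 1 - Q_t^2 = F_t^*F_t = |F_t|^2$ (using $Q_{|t|} = Q_t$) and invoking uniqueness of positive square roots; consequently $Ran(|F_t|) = Ran(|t|)$, and since $|t|$ is selfadjoint, $\overline{Ran(|t|)}^{\perp} = Ker(|t|)$.

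For (ii) $\Leftrightarrow$ (i), condition (ii) is exactly the statement that $\overline{Ran(F_t)}$ and $\overline{Ran(|F_t|)}$ are orthogonal direct summands, so by Wegge-Olsen it is equivalent to the existence of a unique partial isometry $\mathcal{V} \in B(E,F)$ with $F_t = \mathcal{V}|F_t|$ and the stated kernel/range data. To transfer back to $t$, I would use $t = F_t Q_t^{-1}$ and $|t| = |F_t| Q_t^{-1}$ (both valid on $Dom(t) = Ran(Q_t)$), obtaining $t = \mathcal{V}|F_t|Q_t^{-1} = \mathcal{V}|t|$. The converse direction is immediate: the partial-isometry identities $\mathcal{V}^*\mathcal{V} = P_{\overline{Ran(|t|)}}$ and $\mathcal{V}\mathcal{V}^* = P_{\overline{Ran(t)}}$ deliver the decompositions of (ii). Uniqueness of $\mathcal{V}$ is forced by $\mathcal{V}(|t|x) = tx$ on the dense submodule $Ran(|t|)$ together with $\mathcal{V}|_{Ker(|t|)} = 0$.

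For (i) $\Leftrightarrow$ (iii), given the polar decomposition I would build $s := |t|^+ \mathcal{V}^*$, where $|t|^+$ is the algebraic inverse of $|t|$ restricted to $\overline{Ran(|t|)}$, extended by zero on $Ker(|t|)$; the decomposition $E = Ker(|t|) \oplus \overline{Ran(|t|)}$ from (ii) is precisely what makes $|t|^+$ regular, with $Dom(|t|^+) = Ker(|t|) \oplus Ran(|t|)$ dense in $E$. A short calculation then yields $st = P_{\overline{Ran(|t|)}}|_{Dom(t)}$ and $ts = P_{\overline{Ran(t)}}|_{Dom(s)}$; these are restrictions of bounded selfadjoint idempotents to dense submodules, so their closures are the full projections and coincide with their adjoints, giving $(st)^* = \overline{st}$, $(ts)^* = \overline{ts}$, while $tst = t$ and $sts = s$ drop out of the projection identities. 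Conversely, if $s$ is a regular generalized inverse of $t$, then $tst = t$ forces $Ran(t) \subseteq Dom(s)$ and $ts|_{Ran(t)} = \mathrm{id}$, so $Ran(ts) = Ran(t)$; combined with $(ts)^* = \overline{ts}$, this forces $\overline{ts}$ to be a selfadjoint idempotent whose range equals $\overline{Ran(t)}$, hence $\overline{Ran(t)}$ is orthogonally complemented in $F$. The parallel argument applied to $t^*$ with generalized inverse $s^*$ delivers the orthogonal complementation of $\overline{Ran(t^*)} = \overline{Ran(|t|)}$ in $E$, yielding (ii). Uniqueness of $s$ follows from the formula $s = |t|^+ \mathcal{V}^*$ together with uniqueness of $\mathcal{V}$.

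The step I expect to be the main obstacle is confirming the \emph{regularity} (as opposed to merely closed densely definedness) of $|t|^+$ and hence of $s = |t|^+\mathcal{V}^*$. For these I would invoke the graph criterion of \cite{F-S}, Corollary 3.2, leveraging the orthogonal decompositions supplied by (ii) to show that the graphs are orthogonally complemented; once regularity is secured, the four generalized-inverse identities and all the closing equalities $\mathcal{V}^*\mathcal{V} = \overline{t^*s^*}$, $\mathcal{V}\mathcal{V}^* = \overline{ts}$, $\mathcal{V}^*t = |t|$, $\mathcal{V}\mathcal{V}^*t = t$ reduce to routine projection algebra.
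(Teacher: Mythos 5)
Your reduction of (i) $\Leftrightarrow$ (ii) to Wegge--Olsen's theorem via the bounded transform, including the identity $F_{|t|}=|F_t|$ and the cancellation of $Q_t$ to pass from $F_t=\mathcal{V}|F_t|$ back to $t=\mathcal{V}|t|$, is essentially the paper's argument, and your (iii) $\Rightarrow$ (ii) step (closures of $ts$ and $t^*s^*$ are selfadjoint idempotents with ranges $\overline{Ran(t)}$ and $\overline{Ran(t^*)}$) also matches the paper. The candidate inverse you build, $s=|t|^{+}\mathcal{V}^*$, is in fact the same operator the paper defines directly by $s(t(x_1+x_2)+x_3)=x_1$ on $Dom(s)=Ran(t)\oplus Ker(t^*)$, so the difference is only in packaging.

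The genuine gap is exactly at the point you flag as ``the main obstacle'' and then do not resolve. Proving that $s$ is \emph{regular} and -- this is part of statement (iii) and of the closing identities -- that $s^*$ is precisely the generalized inverse of $t^*$ cannot be dispatched by citing the graph criterion of \cite{F-S} in the abstract or by ``routine projection algebra.'' One must actually exhibit an orthogonal complement of $G(s)$ in $F\oplus E$, and the paper's way of doing this is the real content of the implication: starting from $F\oplus E=UG(t)\oplus G(-t^*)$ (Proposition 9.3 of \cite{LAN}) and the decompositions in (ii), it regroups the four summands to obtain $F\oplus E=G(s)\oplus VG(\tilde{s})$, where $\tilde{s}$ is the analogous candidate inverse of $t^*$ and $V(x,y)=(y,-x)$. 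This single computation yields both the regularity of $s$ and the identification $s^*=\tilde{s}$, i.e.\ the ``adjoint to each other'' clause. Your plan contains neither this decomposition nor any substitute for it; note also that the adjoint of $s=|t|^{+}\mathcal{V}^*$ is not computable by a formal rule, since for an unbounded $q$ and bounded $T$ one only has $T^*q^*\subseteq (qT)^*$ in general, so identifying $s^*$ genuinely requires an argument of the above type (likewise, regularity of $|t|^{+}$ itself is asserted rather than proved). Until that step is carried out, the implication (ii) $\Rightarrow$ (iii) and the final assertions $\mathcal{V}^*\mathcal{V}=\overline{t^*s^*}$, $\mathcal{V}\mathcal{V}^*=\overline{ts}$ remain unproved.
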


\begin{proof}
(i) $\Rightarrow$ (ii) Let $\mathcal{V} \in B(E,F)$ be a partial
isometry satisfying condition (i). Then the identity map
of $E$ can be written as the sum of two orthogonal projections
$I-\mathcal{V^*}\mathcal{V}$ and $\mathcal{V^*}\mathcal{V}$. By
Result 7.19 of \cite{KUS} we have
$$
   Ran(I-\mathcal{V^*}\mathcal{V})=Ker(
   \mathcal{V})=Ker(t)=Ker(|t|) \, ,
$$
$$
   Ran( \mathcal{V^*}\mathcal{V})=Ran(
   \mathcal{V^*})=\overline{Ran(|t|)} \, .
$$
So we get $E=Ran(I-\mathcal{V^*}\mathcal{V})\oplus
Ran(\mathcal{V^*}\mathcal{V})=Ker(|t|)\oplus\overline{Ran(|t|)}$.
Similarly, $F$ can be decomposed as $F=Ker(t^*) \oplus
\overline{Ran(t)}$.

(ii) $\Rightarrow$ (i) Let $t \in R(E,F)$ be a regular operator
and $F_{t}$ be its bounded transform. Then Proposition 1.2 of
\cite{F-S}, Result 7.19 of \cite{KUS} and Proposition 3.7 of
\cite{LAN} imply that
$$
  Ker(t)=Ker(F_{t})=Ker(|F_{t}|)=Ker(|t|),
  Ker(t^*)=Ker(F_{t^*})
$$
$$
  \overline{Ran(|F_{t}|)}=\overline{Ran(F_{t}^*)}= \overline{Ran(t^*)}
  =\overline{Ran(|t|)}, \overline{Ran(t)}=\overline{Ran(F_{t})} \, .
$$
From the above equalities and (ii) we have $E=Ker(|F_{t}|)
\oplus \overline{Ran(|F_{t}|)}$, $F=Ker(F_{t}^*) \oplus
\overline{Ran(F_{t})}$. Now Theorem 15.3.7 of \cite{WEG} implies
that there exists a unique partial isometry $ \mathcal{V} \in
B(E,F)$ such that $F_{t}=\mathcal{V}|F_{t}|$ and
$$
Ker(
\mathcal{V})=Ker(F_{t}), \ \ Ker( \mathcal{V^*})=Ker(F_{t}^*),
$$
$$
Ran(\mathcal{V})=\overline{Ran(F_{t})},
Ran(\mathcal{V^*})=\overline{Ran(|F_{t}|)}.
$$

Therefore
$$
  Ker( \mathcal{V})=Ker(t),  Ker(\mathcal{V^*})=
  Ker(t^*) \, ,
$$
$$
  Ran( \mathcal{V})=\overline{Ran(t)},
  Ran(\mathcal{V^*})=\overline{Ran(|t|)} \, .
$$
Furthermore, by Remark 2.2 we have $F_{t}=
\mathcal{V}|F_{t}|=\mathcal{V}(t^*Q_{t^*}tQ_{t})^{
1/2}=\mathcal{V}(t^*tQ_{t}^{2}) ^{1/2}$ that is
$tQ_{t}=\mathcal{V}(t^*t)^{1/2} Q_{t}.$ But $Q_{t}:E
\longrightarrow Ran(Q_{t})=Dom(t)$ is invertible, so
$t=\mathcal{V}(t^*t)^{1/2}= \mathcal{V}|t|$.

(ii) $\Rightarrow$ (iii) Recall, that $Ker(|t|) = Ker(t)$ and
$Ran(|t|)=Ran(t^*)$. We set $Dom(s):=Ran(t) \oplus Ker(t^*)$
and define $s: Dom(s)\subseteq F \longrightarrow E$ by
$s(t(x_{1}+x_{2})+x_{3})=x_{1}$, for all $x_{1} \in Dom(t) \cap
\overline{Ran(t^*)}$, $x_{2} \in Dom(t) \cap Ker(t)$ and $x_{3}
\in Ker(t^*)$. This definition is correct since $E = Ker(t)
\oplus \overline{Ran(t^*)}$ by supposition. Then $s$ is an
$\mathcal{A}$-linear module map the domain of which
is a dense $\mathcal{A}$-submodule of $F$, since
$F=\overline{Ran(t)}\oplus Ker(t^*)$.

For each $x \in Dom(t)$ with $x=x_{1}+x_{2}$, $x_{1} \in
Dom(t) \cap  \overline{Ran(t^*)}$, $x_{2} \in Dom(t) \cap Ker(t)$
we have $tst(x)=ts(t(x_1+x_2)+0)=t(x_1)=t(x_1+x_2)$, i.e.
$tst=t$. Similarly, for each $x=t(x_{1}+x_{2})+x_{3} \in Dom(s)$
such that $x_{1} \in Dom(t) \cap  \overline{Ran(t^*)}$, $x_{2} \in
Dom(t) \cap Ker(t)$ and $x_{3} \in Ker(t^*)$, we have $sts(x)=
st(x_{1}+x_{2})=x_{1}=s(x)$, and so $sts=s$. Now we are going
to derive the properties of Definition 2.3 to demonstrate that
$s$ is a regular operator and the generalized inverse of
the operator $t$.

By the definition of $s$, the equality $ts(t(x_1+x_2)+x_3) =
t(x_1)=t(x_1+x_2)$ holds. Consequently, the operator $ts$ acts on
$Ran(t)$ as the identity operator, and on the orthogonal
complement $Ran(t)^\bot$ as the zero operator. By continuity, the
closure $\overline{ts}$ of $ts$ is the projection onto the
orthogonal summand $\overline{Ran(t)}$ of $F$. So,
$(ts)^*=\overline{(ts)^*}=(\overline{ts})^*=\overline{ts}$.
Analogously, the operator equality $(st)^*=\overline{st}$ can be
derived, and $(st)^*$ can be shown to be the projection onto the
orthogonal summand $\overline{Ran(s)}$ of $E$.

We set $Dom( \tilde{s} ):=Ran(t^*) \oplus Ker(t)$ and define the
module map $\tilde{s}:Dom( \tilde{s} )\subseteq E \longrightarrow
F $ by $ \tilde{s}(t^*(y_{1}+y_{2})+y_{3})=y_{1}$, for any $y_{1}
\in Dom(t^*) \cap \overline{Ran(t)}$, $y_{2} \in Dom(t^*) \cap
Ker(t^*)$ and $y_{3} \in Ker(t)$. Then $\tilde{s}$ is an
$\mathcal{A}$-linear module map which domain $Dom(\tilde{s})$ is
a dense $\mathcal{A}$-submodule of $E$ since
$E=\overline{Ran(|t|)}\oplus Ker(|t|)=\overline{Ran(t^*)}\oplus
Ker(t)$. We also have $t^* \tilde{s} t^*=t^*$ and $\tilde{s}t^*
\tilde{s}=\tilde{s}$. Similarly, $\overline{ t^* \tilde{s}}= (t^*
\tilde{s})^* $ and $ \overline{\tilde{s}t^* }=( \tilde{s}t^*)^*$
are orthogonal projections onto $\overline{Ran(t^*)}$ and $
\overline{Ran(\tilde{s})}$, respectively.

We prove that $s$ is a regular operator and $s^*=\widetilde{s}$.
Consider the isometry $U \in B(E \oplus F, F \oplus E)$ by
$U(x,y)=(y,x)$, then by Proposition 9.3 of \cite{LAN} we have $ F
\oplus E=U G(t) \oplus G(-t^*)$ and so
\begin{eqnarray*}
F \oplus E &=& \{ (t(x_1),x_1): x_1 \in Dom(t)\cap
                \overline{Ran(t^*)} \} \oplus
                \{(0,y_3): y_3 \in Ker(t) \} \\
           & & \oplus \,\, \{ (y_1,-t^*(y_1)): y_1 \in Dom(t^*)\cap
                \overline{Ran(t)} \} \oplus
                \{(x_3,0): x_3 \in Ker(t^*) \} \\
           &=& \{( t(x_1)+x_3 \, ,x_1) \, : x_1 \in Dom(t)
                \cap \overline{Ran(t^*)}~,~ x_3 \in Ker(t^*) \}\\
           & & \oplus \,\, \{( y_1,-t^*(y_1)-y_3) : y_1 \in Dom(t^*)
                \cap \overline{Ran(t)}~,~ y_3 \in Ker(t) \}\\
           &=&  G(s) \oplus VG( \widetilde{s}) \, ,
\end{eqnarray*}

where $ V \in B(E \oplus F, F \oplus E)$ is an isometry defined
by $V(x,y)=(y,-x)$. The equality $ F \oplus E= G(s) \oplus V
G(\tilde{s})$ and Corollary 2.2 of \cite{F-S} imply that the
operator $s$ is adjointable, closed and the range of $1+s^*s$ is
dense in $F$. In particular $s^*= \tilde{s}$. Clearly, the regular
operators $s$ and $s^*$ with the properties of generalized
inverses of the operators $t$ and $t^*$, respectively, are unique.

(iii)$\Rightarrow$(ii) Let $s$ be generalized inverse of $t$, so
that $tst=t$, $sts=s$, $(ts)^*= \overline{ts}$ and $(st)^*=
\overline{st}$. Therefore $(ts) ^{2}=(ts)$ and $Ran(ts)=Ran(t)$,
what implies that $ \overline{ts}$ is an orthogonal projection on $
\overline{Ran(t)}$, i.e. $\overline{Ran(t)}$ is orthogonally
complemented. By the hypothesis $s^*$ is the generalized inverse
of $t^*$, therefore $\overline{t^*s^*}$ is an orthogonal
projection onto $\overline{Ran(t^*)}=\overline{Ran(|t|)}$, i.e.
$\overline{Ran(|t|)}$ is orthogonally complemented.

Note that $ \mathcal{V^*}\mathcal{V}$ is the orthogonal
projection onto $\overline{Ran(|t|)}$ so $
|t|=\mathcal{V^*}\mathcal{V}|t|$. This together with the polar
decomposition of $t$, gives $ \mathcal{V^*}t=|t|$ and $
t=\mathcal{V}\mathcal{V^*}t$.
\end{proof}

The previous theorem and its proof imply some interesting
results as follows:

\begin{corollary} \label{cor_transfer}
If $t \in R(E,F)$ and $F_{t}$ is its bounded transform, then t
has polar decomposition $t= \mathcal{V}|t|$ if and only if
$F_{t}$ has polar decomposition $F_{t}= \mathcal{V}|F_{t}|$, if
and only if  $F_{t}$ has polar decomposition $F_{t}= {\mathcal V}
F_{ |t| }$, for the partial isometry $ \mathcal{V}$ which was
introduced in Theorem \ref{thm_polar_decomp}.
\end{corollary}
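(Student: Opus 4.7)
The plan is to extract both implications directly from the proof of Theorem \ref{thm_polar_decomp} together with one elementary functional-calculus identity on bounded transforms. First, I will observe that the kernel and range equalities displayed in the proof of (ii) $\Rightarrow$ (i) of Theorem \ref{thm_polar_decomp}, namely $Ker(t)=Ker(F_t)=Ker(|F_t|)=Ker(|t|)$, $Ker(t^*)=Ker(F_t^*)$, $\overline{Ran(|F_t|)}=\overline{Ran(|t|)}$ and $\overline{Ran(t)}=\overline{Ran(F_t)}$, turn condition (ii) of Theorem \ref{thm_polar_decomp} into exactly the hypothesis of Wegge-Olsen's Theorem 15.3.7 in \cite{WEG} applied to $F_t$. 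Hence $t$ has polar decomposition if and only if $F_t$ does.

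Second, I will point out that the argument for (ii) $\Rightarrow$ (i) in Theorem \ref{thm_polar_decomp} builds $\mathcal{V}$ by first applying Wegge-Olsen to $F_t$ to get $F_t=\mathcal{V}|F_t|$, and only then uses Remark 2.2 and invertibility of $Q_t$ on its range to deduce $t=\mathcal{V}|t|$ with the same $\mathcal{V}$. Uniqueness of the partial isometry in each of the two polar decompositions then forces the two $\mathcal{V}$'s to coincide, so no further work is needed for the equivalence of the first two formulations.

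Finally, for the third formulation I will verify the identity $|F_t|=F_{|t|}$. Since $t^*t$ and $Q_t=(1+t^*t)^{-1/2}$ commute through the continuous functional calculus, I obtain
$$
  |F_t|=(F_t^*F_t)^{1/2}=(Q_t\, t^*t\, Q_t)^{1/2}=(t^*t)^{1/2}Q_t=|t|(1+|t|^2)^{-1/2}=F_{|t|},
$$
using $|t|^2=t^*t$ in the last step. Multiplying by $\mathcal{V}$ yields $\mathcal{V}|F_t|=\mathcal{V}F_{|t|}$ and completes the chain of equivalences. I do not foresee a serious obstacle: everything substantive was already done in Theorem \ref{thm_polar_decomp}, and the only fresh ingredient is this routine functional-calculus computation in the commutative $C^*$-subalgebra generated by $t^*t$.
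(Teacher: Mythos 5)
Your proposal is correct and follows essentially the same route as the paper's own (very brief) argument: use the coincidence of kernels and closed ranges of $t$ and $F_t$ to shuttle between condition (ii) of Theorem \ref{thm_polar_decomp} and Wegge-Olsen's criterion for $F_t$, with uniqueness identifying the partial isometries, and then observe $|F_t|=F_{|t|}$. The paper phrases that last identity via $Q_{|t|}=Q_t$ rather than your direct computation of $(F_t^*F_t)^{1/2}$, but this is the same functional-calculus fact.
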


For the proof, just recall that $t$ and $F_t$ have the same kernel
and the same range and that $F_{t^*}=F_t^*$. Note that $Q_{ |t| }
= Q_{t}$ and so $F_{ |t| } = | F_{t} |$.

\begin{corollary}
An operator $t \in R(E,F)$ has polar decomposition
$t= \mathcal{V}|t|$ if and only if its adjoint $t^*$ has
polar decomposition $t^*= \mathcal{V^*}|t^*|$,  for the partial
isometry $\mathcal{V}$ which was introduced in Theorem
\ref{thm_polar_decomp}.
\end{corollary}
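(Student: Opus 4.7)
The plan is to apply Theorem~\ref{thm_polar_decomp} symmetrically to $t$ and $t^*$, noting that condition (ii) there is invariant under this passage. Recall from Section~2 that for any regular operator $t$ one has $Ker(|t|) = Ker(t)$ and $\overline{Ran(|t|)} = \overline{Ran(t^*)}$, and the analogous identities hold with $t$ and $t^*$ interchanged since $t^{**}=t$. Consequently the pair of orthogonal decompositions $E = Ker(|t|) \oplus \overline{Ran(|t|)}$, $F = Ker(t^*) \oplus \overline{Ran(t)}$ for $t$ rewrites verbatim as the corresponding pair for $t^*$. The equivalence (i)$\Leftrightarrow$(ii) of Theorem~\ref{thm_polar_decomp} then gives existence of the polar decomposition of $t$ if and only if of $t^*$.

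Granted existence, I would identify the partial isometry of $t^*$ as $\mathcal{V}^*$ by appealing to the uniqueness clause of Theorem~\ref{thm_polar_decomp}(i). It suffices to check that $\mathcal{V}^*$ satisfies the four kernel/range requirements of (i) with $t$ replaced by $t^*$, and these are all immediate from the properties already recorded for $\mathcal{V}$: $Ker(\mathcal{V}^*) = Ker(t^*)$ and $Ker((\mathcal{V}^*)^*) = Ker(\mathcal{V}) = Ker(t) = Ker(t^{**})$, while $Ran(\mathcal{V}^*) = \overline{Ran(|t|)} = \overline{Ran(t^*)}$ and $Ran((\mathcal{V}^*)^*) = Ran(\mathcal{V}) = \overline{Ran(t)} = \overline{Ran(|t^*|)}$. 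By the uniqueness assertion, the partial isometry appearing in the polar decomposition of $t^*$ must coincide with $\mathcal{V}^*$, and hence $t^* = \mathcal{V}^*|t^*|$.

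The argument is essentially a bookkeeping exercise once Theorem~\ref{thm_polar_decomp} is available, and I do not expect any substantive obstacle. The one subtle point worth flagging is that a direct attempt to adjoin the identity $t = \mathcal{V}|t|$ yields, via Remark~2.1 applied to the bounded factor $\mathcal{V}$, only the equation $t^* = |t|\mathcal{V}^*$, which is not manifestly of the form $\mathcal{V}^*|t^*|$ and would require relating $|t|$ to $|t^*|$ explicitly. Routing the argument through uniqueness sidesteps this issue; alternatively, one could transport everything to the bounded transform by Corollary~\ref{cor_transfer} and use that bounded polar decomposition on Hilbert modules behaves well under adjunction, but the uniqueness route seems the most economical.
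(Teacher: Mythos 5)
Your first paragraph is sound: condition (ii) of Theorem \ref{thm_polar_decomp} is indeed symmetric under passing from $t$ to $t^*$ (using $Ker(|t|)=Ker(t)$, $\overline{Ran(|t|)}=\overline{Ran(t^*)}$ and $t^{**}=t$), so the \emph{existence} of a polar decomposition for $t$ is equivalent to the existence of one for $t^*$; this is a legitimate alternative to the paper for that part. The gap is in your identification step. The uniqueness clause of Theorem \ref{thm_polar_decomp}(i) asserts uniqueness among partial isometries $\mathcal{W}$ satisfying \emph{both} the factorization $t^*=\mathcal{W}|t^*|$ \emph{and} the kernel/range conditions; it does not say that any partial isometry with the correct initial and final submodules is the polar isometry. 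Hence checking only the four kernel/range identities for $\mathcal{V}^*$ proves nothing: on a Hilbert space, if $t$ is positive and invertible the polar isometry is the identity, yet every unitary $U$ has the same (full) initial and final spaces while $t\neq U|t|$ in general. You still owe the identity $t^*=\mathcal{V}^*|t^*|$, which is exactly the point you flagged (``$t^*=|t|\mathcal{V}^*$ is not manifestly of the form $\mathcal{V}^*|t^*|$'') and then sidestepped; the uniqueness route does not sidestep it, it presupposes it.

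This missing identity is precisely what the paper's proof supplies, at the level of the bounded transform: from $F_t=\mathcal{V}|F_t|$ one gets $F_t^*=|F_t|\mathcal{V}^*$; setting $G:=\mathcal{V}|F_t|\mathcal{V}^*$ and using $\mathcal{V}^*\mathcal{V}|F_t|=|F_t|$, one checks $G=G^*$ and $G^2=F_tF_t^*=|F_t^*|^2$, so $G=|F_t^*|$ and therefore $F_t^*=|F_t|\mathcal{V}^*=\mathcal{V}^*\mathcal{V}|F_t|\mathcal{V}^*=\mathcal{V}^*|F_t^*|$; the argument of Theorem \ref{thm_polar_decomp}, (ii) $\Rightarrow$ (i) (equivalently Corollary \ref{cor_transfer}), then converts $F_{t^*}=\mathcal{V}^*|F_{t^*}|$ into $t^*=\mathcal{V}^*|t^*|$, and the converse follows by symmetry since $\mathcal{V}^{**}=\mathcal{V}$. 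To repair your proposal, keep your existence argument if you like, but replace the appeal to uniqueness by this computation (or an equivalent verification that $|t^*|=\mathcal{V}|t|\mathcal{V}^*$ on the appropriate domain); doing it for the bounded transform avoids all domain issues, which is why the paper routes the argument that way.
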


\begin{proof}If $t= \mathcal{V}|t|$ then $F_{t}=
\mathcal{V}|F_{t}|$, so $F_{t}^*=|F_{t}|\mathcal{V^*}$. Define
$G:=\mathcal{V} |F_{t}| \mathcal{V^*}$, then $G$ is selfadjoint
and $G^{2}=G.G=\mathcal{V} |F_{t}| \mathcal{V^*} \,  \mathcal{V}
|F_{t}| \mathcal{V^*}=\mathcal{V} |F_{t}| \,
|F_{t}|\mathcal{V^*}=F_{t} F_{t}^*=|F_{t}^*|^{2}$, i.e.
$G=|F_{t}^*|=\mathcal{V} |F_{t}| \mathcal{V^*}$. Thus
$F_{t}^*=|F_{t}|\mathcal{V^*}=\mathcal{V^*} \mathcal{V} |F_{t}|
\mathcal{V^*}= \mathcal{V^*} |F_{t}^*|$. Following the proof of
Theorem \ref{thm_polar_decomp}, (ii) $ \Rightarrow$ (i), we get
$t^*=\mathcal{V^*}|t^*|$. The converse direction can be shown
taking into account that ${\mathcal V}^{**}=\mathcal V$ and
interchanging the roles of $t$ and $t^*$ in the first part of the
proof.
\end{proof}

\begin{corollary}
If $t \in R(E,F)$ has closed range, then $t$ has
polar decomposition. In this case the generalized inverse of $t$
is a bounded adjointable operator.
\end{corollary}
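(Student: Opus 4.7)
The plan is to apply Theorem~\ref{thm_polar_decomp} by observing that its condition~(ii) is automatic under a closed-range hypothesis, and then to extract the boundedness of the generalized inverse from the explicit construction carried out in the theorem's proof.

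First I would quote the fact already recalled in the preliminaries (Proposition~2.2 of \cite{F-S} together with Result~7.19 of \cite{KUS}): whenever a regular operator $t$ has closed range, one has the orthogonal sum decompositions
\[ E = Ker(|t|) \oplus \overline{Ran(|t|)}, \qquad F = Ker(t^*) \oplus \overline{Ran(t)}. \]
This is precisely condition~(ii) of Theorem~\ref{thm_polar_decomp}, so $t$ admits the (unique) polar decomposition $t = \mathcal{V}|t|$, and simultaneously the unique regular generalized inverses $s$ of $t$ and $s^*$ of $t^*$ exist.

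Next I would revisit the explicit construction of $s$ produced in the (ii) $\Rightarrow$ (iii) portion of that proof, where $Dom(s) = Ran(t) \oplus Ker(t^*)$. Under our closed-range hypothesis, $Ran(t) = \overline{Ran(t)}$, so $Dom(s) = \overline{Ran(t)} \oplus Ker(t^*) = F$; that is, $s$ is everywhere defined on $F$. Since $s \in R(F,E)$ is in particular a closed $\mathcal{A}$-linear (hence $\mathbb{C}$-linear) map whose domain is the entire Banach space $F$, the Banach-space closed graph theorem forces $s$ to be bounded.

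To upgrade ``bounded'' to ``bounded adjointable'', I would note that regularity of $s$ furnishes a densely defined adjoint $s^*$ with $\langle sx,y\rangle = \langle x, s^*(y)\rangle$ for $y \in Dom(s^*)$; testing against unit vectors $x \in F$ gives $\|s^*(y)\| \le \|s\|\,\|y\|$ on $Dom(s^*)$, and closedness of $s^*$ then extends it to all of $E$. Hence $s \in B(F,E)$. The one point worth pausing over — and the only potential obstacle — is this last passage from ``regular and everywhere defined'' to ``bounded adjointable in the Hilbert $C^*$-module sense'', but it is routine once boundedness of $s$ itself is established via the closed graph theorem.
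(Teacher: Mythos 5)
Your proposal is correct and follows essentially the same route as the paper: closed range gives the decompositions $E=Ran(t^*)\oplus Ker(t)$, $F=Ran(t)\oplus Ker(t^*)$ (Proposition 2.2 of \cite{F-S}), hence condition (ii) of Theorem \ref{thm_polar_decomp}, and boundedness of the generalized inverse comes from $Dom(s)=Ran(t)\oplus Ker(t^*)=F$ in the explicit construction from the (ii) $\Rightarrow$ (iii) step. The paper is terser, simply noting that both $s$ and $\tilde{s}=s^*$ are everywhere defined and hence bounded; your closed-graph and adjointability details just make explicit what the paper leaves implicit.
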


\begin{proof}If $t$ has closed range then Proposition 1.2 of
\cite{F-S} implies that $E=Ran(t^*) \oplus Ker(t)$ and $F=Ran(t)
\oplus Ker(t^*)$, so $t$ has polar decomposition by Theorem
\ref{thm_polar_decomp}.
The operators $s$ and $ \tilde{s}$ were defined in part ''(ii)
$\Rightarrow $ (iii)''. They are bounded because $Dom(s)=Ran(t)
\oplus Ker(t^*)=F$ and $Dom( \tilde{s} )=Ran(t^*) \oplus Ker(t)=E$,
i.e.~the generalized inverse of $t$ is a bounded adjointable
operator.
\end{proof}

Theorem \ref{thm_polar_decomp} and Corollary \ref{cor_transfer}
together with a recent result by Lun Chuan Zhang \cite{Zhang} and
by Qingxiang Xu and Lijuan Sheng \cite{Xu/Sheng} give
us the opportunity to derive a criterion for bounded $C^*$-linear
operators between Hilbert $C^*$-modules to admit a generalized
inverse in the sense of Banach algebra theory. These authors proved
independently that a bounded adjointable $C^*$-linear operator between
two Hilbert $C^*$-modules admits a bounded generalized inverse if
and only if the operator has closed range.

\begin{proposition}
Let $T \in R(E,F)$ be a bounded $\mathcal{A}$-linear operator
between two Hilbert  ${\mathcal{A}}$-modules $E,F$ over some
fixed $C^*$-algebra ${\mathcal{A}}$. Suppose, $T$ has polar
decomposition. Then $T$ admits a regular
operator $s$ as its generalized inverse. The converse implication
is also true. Moreover, the generalized inverse $s$ is
bounded if and only if the range of $T$ is closed.
\end{proposition}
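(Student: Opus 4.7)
The plan is to reduce everything to Theorem~\ref{thm_polar_decomp} together with the corollary immediately preceding this proposition (the one saying that a regular operator with closed range has polar decomposition and admits a bounded adjointable generalized inverse). The first assertion is essentially immediate: since $T \in R(E,F)$ is bounded and, by hypothesis, admits polar decomposition, part~(iii) of Theorem~\ref{thm_polar_decomp} produces a unique regular operator $s \in R(F,E)$ that serves as the generalized inverse of $T$.

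For the moreover part I would treat both implications separately. For the ``if'' direction: if $Ran(T)$ is closed, the corollary just mentioned tells us that the generalized inverse supplied by Theorem~\ref{thm_polar_decomp} is in fact bounded and adjointable; equivalently, one may invoke the Zhang and Xu--Sheng characterization cited above, together with the uniqueness clause of Theorem~\ref{thm_polar_decomp}, to identify the bounded generalized inverse they produce with our $s$. For the converse, I would assume $s$ is bounded, note that then $Dom(s) = F$ and $Ts \in B(F,F)$, and argue that the relations $(Ts)^{2} = T(sts) = Ts$ together with $(Ts)^{*} = \overline{Ts} = Ts$ (the closure being redundant since $Ts$ is already everywhere defined and bounded) force $Ts$ to be a self-adjoint idempotent, hence an orthogonal projection in $B(F,F)$. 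From $TsT = T$ one then extracts $Ran(T) \subseteq Ran(Ts) \subseteq Ran(T)$, so $Ran(T) = Ran(Ts)$ is closed.

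I expect no serious obstacle: the statement is a bookkeeping consequence of Theorem~\ref{thm_polar_decomp} together with the elementary fact that a bounded self-adjoint idempotent has closed range. The only point that requires attention is, in the ``only if'' direction, the observation that boundedness of $s$ collapses the relation $(ts)^{*} = \overline{ts}$ from Definition~2.3 to plain self-adjointness of $Ts$, which is what unlocks the projection argument and yields the closed-range conclusion.
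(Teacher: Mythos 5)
Your proposal is correct and follows essentially the route the paper intends: the paper states this proposition without a separate proof, leaving it to Theorem \ref{thm_polar_decomp} ((i)$\Leftrightarrow$(iii)), the preceding corollary on closed-range regular operators, and the quoted Zhang and Xu--Sheng criterion, which is exactly the machinery you invoke. Your only deviation is that for the direction ``$s$ bounded $\Rightarrow Ran(T)$ closed'' you give a short self-contained argument ($Ts$ is an everywhere defined self-adjoint idempotent with $Ran(Ts)=Ran(T)$) rather than citing Zhang/Xu--Sheng, which works; just note explicitly that ``$s$ bounded $\Rightarrow Dom(s)=F$'' uses that $s$, being regular, is closed and densely defined, so its domain is both closed and dense.
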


\begin{corollary}
For $t \in R(E,F)$ the bounded transform $F_t$ has a bounded
generalized inverse if and only if $F_t$ has closed range, if and
only if $t$ has closed range.
\end{corollary}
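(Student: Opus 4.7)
The plan is to obtain both equivalences from short, structural observations rather than from any new argument. First, I would note that $F_t \in B(E,F)$ is a bounded adjointable $\mathcal{A}$-linear operator between Hilbert $C^*$-modules, so the theorem of Zhang \cite{Zhang} and of Xu and Sheng \cite{Xu/Sheng} quoted in the paragraph preceding Proposition~3.5 applies directly to $F_t$. That theorem states precisely that a bounded adjointable operator between Hilbert $C^*$-modules admits a bounded generalized inverse if and only if it has closed range. Applied to $F_t$, this yields the first of the two claimed equivalences at once, with no further work needed.

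For the second equivalence, the key observation is the set-theoretic identity $Ran(F_t) = Ran(t)$ as subsets of $F$. By the basic properties of the bounded transform recalled in Section~2 we have $F_t = tQ_t$ with $Q_t = (1+t^*t)^{-1/2} \in B(E)$, and the same source records $Ran(Q_t) = Dom(t)$. Consequently,
$$
  Ran(F_t) \;=\; t(Ran(Q_t)) \;=\; t(Dom(t)) \;=\; Ran(t),
$$
so the range of $F_t$ is closed in $F$ precisely when the range of $t$ is closed in $F$. Chaining this with the first equivalence will complete the proof of the corollary.

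I do not anticipate any real obstacle. The only subtlety worth flagging is the distinction between the general identity $\overline{Ran(F_t)} = \overline{Ran(t)}$, which the paper invokes repeatedly and which alone would not suffice here, and the stronger literal equality $Ran(F_t) = Ran(t)$ used above; the latter is what allows the closed-range property to transfer between $t$ and $F_t$, and it rests on the identification $Ran(Q_t) = Dom(t)$ rather than on any denseness statement.
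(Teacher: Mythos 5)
Your proposal is correct and follows exactly the route the paper intends (the corollary is left without an explicit proof there): the Zhang and Xu--Sheng theorem applied to the bounded adjointable operator $F_t$ gives the first equivalence, and the second rests on the fact, already recorded in Section~2, that $t$ and $F_t$ have literally the same range. Your derivation of $Ran(F_t)=Ran(t)$ from $F_t=tQ_t$ and $Ran(Q_t)=Dom(t)$ is a valid short justification of that same fact, and your remark that the mere equality of range closures would not suffice is well taken.
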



Magajna and Schweizer have shown, respectively, that
$C^*$-algebras of compact operators can be characterized by the
property that every norm closed (coinciding with its biorthogonal
complement, respectively) submodule of every Hilbert $C^*$-module
over them is automatically an orthogonal summand, cf. \cite{MAG},
\cite{SCH}. Recently further generic properties of the category
of Hilbert $C^*$-modules over $C^*$-algebras which characterize
precisely the $C^*$-algebras of compact operators have been found
by the authors in \cite{FR1} and \cite{F-S}. All in all,
$C^*$-algebras of compact operators turn out to be of unique
interest in Hilbert $C^*$-module theory.

\begin{theorem} \label{thm-3-7}
Let $\mathcal{A}$ be a $C^*$-algebra. The
following conditions are equivalent, among others:
\begin{list}{(\roman{cou001})}{\usecounter{cou001}}

\item $\mathcal{A}$ is an arbitrary $C^*$-algebra of compact operators.

\item For every (maximal) norm closed left ideal $I$
of $ \mathcal{A}$ the corresponding open projection $p \in
\mathcal{A}^{**}$ is an element of the multiplier $C^*$-algebra
$M(\mathcal{A})$ of $ \mathcal{A}$.

\item For every Hilbert $\mathcal{A}$-module $E$ every
Hilbert $\mathcal{A}$-submodule $F \subseteq E$ is automatically
orthogonally complemented, i.e. $F$ is an orthogonal summand.

\item For every Hilbert $\mathcal{A}$-module $E$ every Hilbert
$\mathcal{A}$-submodule $F \subseteq E$ that coincides
with its biorthogonal complement $F^{\perp \perp}
\subseteq E$ is automatically orthogonally complemented in $E$.

\item For every Hilbert $ \mathcal{A}$-module $E$
every Hilbert $ \mathcal{A}$-submodule is automatically
topologi\-cally complemented there, i.e. it is a topological
direct summand.

\item For every pair of Hilbert $ \mathcal{A}$-modules
$E, F$, every densely defined closed operator $t:
Dom(t)\subseteq E \to F$ possesses a densely defined
adjoint operator $t^*: Dom(t^*)\subseteq  F \to E$.

\item For every pair of Hilbert $ \mathcal{A}$-modules
$E, F$, every densely defined closed operator $t:
Dom(t)\subseteq E \to F$ is regular.

\item The kernels of all densely defined closed
operators between arbitrary Hilbert $\mathcal{A}$-modules
are orthogonal summands.

\item The images of all densely defined closed operators
with norm closed range between ar\-bitrary Hilbert
$\mathcal{A}$-modules are orthogonal summands.
\end{list}
\end{theorem}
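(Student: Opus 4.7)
The plan is to extend the cycle of equivalences $(i) \Leftrightarrow (ii) \Leftrightarrow \cdots \Leftrightarrow (vii)$---already assembled in the cited works of Arveson, Magajna, Schweizer, and the authors (\cite{ARV}, \cite{MAG}, \cite{SCH}, \cite{FR1}, \cite{F-S})---to cover $(viii)$ and $(ix)$ as well. Concretely, I will join $(viii)$ and $(ix)$ to this chain via the four implications $(iii) \Rightarrow (viii)$, $(iii) \Rightarrow (ix)$, $(ix) \Rightarrow (iii)$, and $(viii) \Rightarrow (iii)$.

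The forward implications $(iii) \Rightarrow (viii)$ and $(iii) \Rightarrow (ix)$ are immediate: the kernel of a densely defined closed operator is automatically a closed submodule of the source, and a closed-range image is a closed submodule of the target, so both inherit orthogonal complementation from the universal hypothesis $(iii)$.

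For $(ix) \Rightarrow (iii)$, given a closed submodule $F \subseteq E$, I would consider the inclusion $i : F \to E$ regarded as a bounded $\mathcal{A}$-linear map between the two Hilbert $\mathcal{A}$-modules $F$ and $E$. It is everywhere defined on $F$, hence a densely defined closed operator, and its range equals the closed submodule $F$. Invoking $(ix)$ then makes $F$ an orthogonal summand of $E$, which is exactly $(iii)$.

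The main new content is $(viii) \Rightarrow (iii)$. For a closed submodule $F \subseteq E$, I plan to exhibit $F$ (up to isometric embedding) as the kernel of a bounded, everywhere defined operator, namely $t : E \oplus F \to E$ given by $t(x, y) = x - y$; its kernel is the diagonal $\Delta(F) = \{(y,y) : y \in F\}$ inside $E \oplus F$. By $(viii)$, $\Delta(F)$ is an orthogonal summand of $E \oplus F$. The decisive step will then be to transfer this complementation from $\Delta(F) \subseteq E \oplus F$ back to $F \subseteq E$: decomposing an arbitrary $(e, 0) \in E \oplus F$ along the splitting $E \oplus F = \Delta(F) \oplus \Delta(F)^\perp$ as $(g, g) + (e-g, -g)$ should, using the identity $\langle g, f \rangle_F = \langle g, f \rangle_E$ for $g, f \in F$, force $e - 2g \in F^\perp$ and hence $e \in F + F^\perp$. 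Since $e \in E$ is arbitrary, this yields $E = F \oplus F^\perp$. This transfer is the main (though fairly mild) obstacle in the argument; everything else is either trivial or already in the literature.
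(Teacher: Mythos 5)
Your argument is correct, but it is worth noting that the paper itself gives no written proof of this theorem at all: it is presented as a compendium, with the equivalences (i)--(v) taken from Magajna and Schweizer \cite{MAG}, \cite{SCH}, and the operator-theoretic conditions (vi)--(ix) from the authors' earlier work \cite{FR1}, \cite{F-S}, so the "official" proof is essentially a citation. Against that background your contribution is a short, self-contained way of welding (viii) and (ix) onto the bounded-module condition (iii). The implications (iii) $\Rightarrow$ (viii), (ix) are indeed trivial (kernels of closed operators are norm closed, as are closed-range images), and (ix) $\Rightarrow$ (iii) via the inclusion $i: F \to E$, an everywhere defined bounded operator with closed range $F$, is exactly the kind of degenerate instance one should use. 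The genuinely nice step is (viii) $\Rightarrow$ (iii): realizing $F$ as the diagonal $\Delta(F)=\ker\bigl(t:E\oplus F\to E,\ t(x,y)=x-y\bigr)$ and then transferring the splitting $E\oplus F=\Delta(F)\oplus\Delta(F)^{\perp}$ back to $E$; your computation is sound, since decomposing $(e,0)=(g,g)+(e-g,-g)$ and pairing the second summand against $(f,f)$, $f\in F$, gives $\langle e-2g,f\rangle_E=0$, hence $e=2g+(e-2g)\in F+F^{\perp}$ and $E=F\oplus F^{\perp}$. What your route buys is an elementary, module-level argument that avoids any appeal to the unbounded regularity machinery of \cite{F-S} for these two conditions; what it does not (and cannot, at this level) replace is the hard core of the theorem, namely the equivalence of (iii) with (i), (ii) and with the regularity/adjointability statements (vi), (vii), for which you, like the paper, must lean on the cited literature.
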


\begin{corollary}
Let $\mathcal{A}$ be a $C^*$-algebra. The
following conditions are equivalent:
\begin{list}{(\roman{cou001})}{\usecounter{cou001}}
\item $\mathcal{A}$ is an arbitrary $C^*$-algebra of
compact operators.
\end{list}
\begin{list}{(\roman{cou001})}{\usecounter{cou001}}
   \setcounter{cou001}{9}
\item For every pair of Hilbert $ \mathcal{A}$-modules
$E, F$, every densely defined closed operator $t: Dom(t)\subseteq
E \to F$ has polar decomposition, i.e. there exists a unique
partial isometry $ \mathcal{V}$ with initial set $
\overline{Ran(|t|)}$ and the final set $ \overline{Ran(t)}$ such
that $t=\mathcal{V}|t|$.

\item For every pair of Hilbert $ \mathcal{A}$-modules
$E, F$, every densely defined closed operator $t: Dom(t)\subseteq
E \to F$ and its adjoint have generalized inverses.
\end{list}
\end{corollary}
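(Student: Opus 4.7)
The plan is to bootstrap this equivalence from the two substantial results already at hand: Theorem \ref{thm_polar_decomp}, which for a regular operator $t$ ties a polar decomposition to the orthogonal complementation of $\overline{Ran(|t|)}$ and $\overline{Ran(t)}$ and to the existence of mutually adjoint generalized inverses of $t$ and $t^*$, together with Theorem \ref{thm-3-7}, which characterizes $C^*$-algebras of compact operators among arbitrary $C^*$-algebras by a long list of module- and operator-theoretic properties. The implications (i) $\Rightarrow$ (x) and (i) $\Rightarrow$ (xi) will be established simultaneously, and the two converse implications (x) $\Rightarrow$ (i) and (xi) $\Rightarrow$ (i) will be handled in one stroke afterwards.

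For the forward direction, assume $\mathcal{A}$ is a $C^*$-algebra of compact operators. By Theorem \ref{thm-3-7}, condition (vii), every densely defined closed operator $t: Dom(t)\subseteq E\to F$ between Hilbert $\mathcal{A}$-modules is automatically regular, so that $t^*$, $t^*t$ and $|t|=(t^*t)^{1/2}$ are well-defined regular operators and Theorem \ref{thm_polar_decomp} applies. By Theorem \ref{thm-3-7}, condition (iii), every norm-closed Hilbert $\mathcal{A}$-submodule of any Hilbert $\mathcal{A}$-module is an orthogonal summand; in particular $\overline{Ran(|t|)}\subseteq E$ and $\overline{Ran(t)}\subseteq F$ are orthogonally complemented. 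Combined with the standard identifications $Ker(|t|)=\overline{Ran(|t|)}^\perp$ (from selfadjointness of the regular operator $|t|$) and $Ker(t^*)=\overline{Ran(t)}^\perp$ (from the adjoint relation for regular $t$), this verifies condition (ii) of Theorem \ref{thm_polar_decomp}. That theorem then delivers simultaneously the polar decomposition $t=\mathcal{V}|t|$ with the partial isometry $\mathcal{V}$ of initial set $\overline{Ran(|t|)}$ and final set $\overline{Ran(t)}$, together with the unique mutually adjoint generalized inverses of $t$ and $t^*$.

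For the converse, observe that both (x) and (xi) tacitly presuppose that every densely defined closed operator be regular: condition (x) speaks of $|t|=(t^*t)^{1/2}$, which needs $t^*$ to be densely defined and $t^*t$ to admit a regular square root in the usual sense, while Definition 2.3 restricts the very notion of generalized inverse to elements of $R(E,F)$ from the outset. Consequently each of (x) and (xi) implies that every densely defined closed operator between Hilbert $\mathcal{A}$-modules is regular, i.e.\ condition (vii) of Theorem \ref{thm-3-7}, whence the implication (vii) $\Rightarrow$ (i) of that theorem yields (i). The only step requiring any care is precisely this last observation --- making the implicit regularity hypothesis in (x) and (xi) explicit; once acknowledged, the corollary reduces to a repackaging of Theorem \ref{thm_polar_decomp} and Theorem \ref{thm-3-7}.
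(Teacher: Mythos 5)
Your proposal is correct and takes essentially the same route as the paper, whose (one-line) proof likewise deduces the corollary from Theorem \ref{thm_polar_decomp} combined with the equivalent conditions of Theorem \ref{thm-3-7}; the only (immaterial) difference is which items of Theorem \ref{thm-3-7} are invoked --- the paper cites (vii) and (viii), while you use (vii) together with (iii) to get the orthogonal complementation of $\overline{Ran(|t|)}$ and $\overline{Ran(t)}$ needed for condition (ii) of Theorem \ref{thm_polar_decomp}. Your explicit remark that conditions (x) and (xi) tacitly force adjointability/regularity of every densely defined closed operator (so that condition (vi)/(vii) of Theorem \ref{thm-3-7}, and hence (i), follows) is exactly the reading implicit in the paper's citation of (vii), and is a fair and sufficient justification of the converse.
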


\begin{proof}The statements are deduced from
Theorem \ref{thm_polar_decomp} and from the conditions (vii), (viii)
of Theorem \ref{thm-3-7}.
\end{proof}

After searching in A.M.S.' MathSciNet data base we believe
that part of the results of Theorem 3.1, Proposition 3.5, and Corollaries
3.4, 3.6 and 3.8 are essentially new even in the case of Hilbert
spaces.

\medskip
{\bf Acknowledgement}: This research was done while the second
author stayed at the Ma\-the\-matisches Institut of the Westf\"alische
Wilhelms-Universit\"at in M\"unster, Germany, in 2007. He would like to
express his thanks to professor Michael Joachim and his
colleagues in the topology and operator algebras groups for
warm hospitality and great scientific atmosphere.
The authors would like to thank the referee for his/her very
useful comments which helped to improve parts of the proof
of Theorem 3.1 and to shorten the presentation of some facts.

\end{document}